\documentclass{article}
\usepackage{dsinclude}

\title{\textbf{Representation schemes and rigid maximal Cohen-Macaulay modules}}
\author{Hailong Dao and Ian Shipman}

\DeclareMathOperator{\MCM}{MCM}
\def\GV{G_{V_\bt}}
\def\RepRAV{\Rep_R(A,V_\bt)}

\begin{document}
\maketitle
\begin{abstract}
Let $\bk$ be an algebraically closed field and $A$ be a finitely generated, centrally finite, non-negatively graded (not necessarily commutative) $\bk$-algebra. In this note we construct a representation scheme for graded maximal Cohen-Macaulay $A$ modules. Our main application asserts that when $A$ is commutative with an isolated singularity, for a fixed multiplicity, there are only finitely many indecomposable rigid (i.e, with no nontrivial self-extensions) MCM modules up to shifting and isomorphism. We appeal to a result by  Keller, Murfet, and Van den Bergh to  prove a similar result for rings that are completion of graded rings. Finally, we discuss how finiteness results for rigid MCM modules are related to recent work by Iyama and Wemyss on maximal modifying modules over compound Du Val singularities.      
\end{abstract}

\section{Introduction}
Since the pioneering work of Kac \cite{Ka}, schemes parameterizing (framed) finite dimensional modules over algebras have been an important tool in representation theory \cite{Kra,Sc} and noncommutative geometry \cite{CEG,CQ,KR,L}.  Let $Q = (Q_0,Q_1)$ be a quiver with vertex set $Q_0$ and edge set $Q_1$.  Fix a field $\bk$.  A representation of $Q$ over $\bk$ is then a collection $\{V_i : i \in Q_0\}$ of $\bk$-vector spaces together with linear transformations $\phi_a:V_{s(a)} \to V_{t(a)}$ for each edge $a \in Q_1$ (where $s(a)$ and $t(a)$ are the source and target vertices of $a$, respectively).  The dimension vector of a representation $(\{V_i\},\{\phi_a\})$ is the vector $\bd = (\dim(V_i)) \in \N^{Q_0}$.  While the underlying vector spaces of a representation of $Q$ are not an isomorphism invariant, its dimension vector is one.  Let us fix a dimension vector $\bd$ and collection $\{V_i : i \in Q_0\}$ of finite-dimensional $\bk$-vector spaces where $\dim(V_\bt) = \bd$.  Then we define the representation space 
\[ \Rep(Q,V_\bt) = \prod_{a \in Q_1}{ \Hom_\bk(V_{s(a)},V_{t(a)}) }. \]
The group $G = \prod_{i \in Q_0} \GL(V_i)$ naturally acts on $\Rep(Q,V_\bt)$ by simultaneous change of basis.  The $G$-orbits are in bijective correspondence with the isomorphism classes of representations of $Q$ with dimension vector $\bd$.  An analogous construction is available to parameterize finite dimensional representations of any associative algebra, up to change of basis.  

Let $\bk$ be a field and suppose that $A$ is a $\bk$-algebra that is module-finite over its center $Z(A)$ and such that $Z(A)$ is a finitely generated $\bk$-algebra.  We adopt the following notion of maximal Cohen-Macaulay module in this setting, motivated by the notion of ``centrally Cohen-Macaulay'' in \cite{BHM,M}.

\begin{definition}  We say that a finitely generated, left $A$ module $M$ is \emphb{maximal Cohen-Macaulay (MCM)} if it is MCM over $Z(A)$.
\end{definition}

Suppose that $R \subset Z(A)$ is a polynomial subring over which $A$ is module-finite.  Let $M$ be an MCM $A$-module.  Then $M$ is free over $R$, so we can think about MCM $A$-modules as representations $A \to M_n(R)$ that are compatible with the action of $R$ on $A$.  This point of view goes back to the beginning of the study of MCM modules.  If MCM $A$-modules are analogous to finite dimensional representations of associative algebras, then what plays the role of the representation scheme? 

From now on we assume that $A$ is non-negatively graded and that $A_0$, the degree-zero component of $A$ is finite dimensional over $\bk$.  Once again, assume that $R \subset A$ is a graded, central, polynomial subring over which $A$ is module-finite.  (We do not assume that $R$ is standard-graded.)  Let $M$ be a graded MCM $A$-module.  Then $M$ is graded-free as an $R$-module.  We note that the isomorphism class over $R$ of a graded MCM $A$-module is constant in families.  So fix a graded $\bk$-vector space $V_\bt$.  We will construct a scheme $\RepRAV$ of finite type over $\bk$ parameterizing graded $A$-module structures on $V_\bt \tensor R$ which extend the free $R$-module structure.  Moreover, $\GV = \Aut_R(V_\bt \tensor R)_0$, the group of degree-preserving automorphisms of $V_\bt \tensor R$ over $R$ is an algebraic group and it acts on $\RepRAV$ by ``change of basis''.  The scheme $\RepRAV$ with its $\GV$ action shares many properties with its counterpart in the world of finite dimensional algebras.  As in the case of modules over a finite dimensional algebra, the $\GV$-orbits on $\RepRAV$ are in one-one correspondence with the isomorphism classes of MCM $A$-modules that are isomorphic to $V_\bt \tensor R$ as $R$-modules.  Furthermore for any point $M:A \to \End_R(V_\bt \tensor R)$ in $\RepRAV$, there is an exact sequence of $\bk$-vector spaces
\[ 0 \to \End_A(M)_0 \to \End_R(V_\bt \tensor R)_0 \to T_M \RepRAV \to \Ext^1_A(M,M)_0 \to 0, \]
where and $T_M \RepRAV$ is the Zariski tanget space to the scheme $\RepRAV$ at the point $M$ (see Theorem \ref{prop-tangent-sequence}).

One of the first applications of the representation scheme in the theory of finite dimensional algebras is to show that for each dimension vector there are only finitely many isomorphism classes of rigid modules, that is modules with no non-split self extensions (see \cite{Hap}). This has been applied in commutative algebra to show that for each multiplicity, there are only finitely many isomorphism classes of semi-dualizing modules (\cite{CS}). In this direction we prove:

\begin{mainthm}\label{mainThm}
Assume that $A$ is commutative, with an isolated singularity.  For each graded free $R$-module $V$ there are only finitely many isomorphism classes of rigid, graded MCM $A$-modules of type $V$.
\end{mainthm}

Together with a finiteness result about indecomposable modules, this implies
\begin{custcor}{A}\label{cor:finitely-many}
Up to shifting, there are only finitely many isomorphism classes of \emph{indecomposable}, rigid MCM $A$-modules of each rank.
\end{custcor}

Here, the rank is taken over some Noether normalization of $A$.  Obviously, one can replace that by the rank over $A$, if it is a domain, or more generally, by the Hilbert-Samuel multiplicity with respect to the maximal ideal.   

Next, we appeal to a result of Keller, Murfet, and Van den Bergh \cite{KMVDB} to obtain a finiteness theorem for gradable rings.
\begin{custcor}{B}\label{cor:complete}
Let $A$ be a complete, local $\bk$-algebra with an isolated singularity which is the completion of some non-negatively graded ring.  Then $A$ admits only finitely many indecomposable MCM $A$-modules of each rank.
\end{custcor}

In the final section, we consider the conjecture that a commutative local ring with an isolated singularity admits only finitely many isomorphism classes of rigid MCM modules of a given multiplicity.

\textbf{Acknowledgments}
We are delighted to thank Bhargav Bhatt and Igor Burban for interesting conversations and correspondence, and Srikanth Iyengar and Michael Wemyss for many helpful comments on an earlier version of this article.  The  authors are partially supported by NSF awards DMS-1104017 and  DMS-1204733.


\section{Main constructions}\label{sec:main-constructions}
Fix an algebraically closed field $\bk$.  Let $A$ be a non-negatively graded $\bk$-algebra whose center is finitely generated over $\bk$ and which is module-finite over its center.  Note that $Z(A)$ automatically inherits the grading from $A$.  Assume furthermore that $A_0$ is finite-dimensional over $\bk$ and that $Z(A)$ is a finitely generated $\bk$-algebra.  Throughout this section, unadorned tensor products are understood to be over $\bk$.

We recall from the introduction the class of modules that we will study.  A \emphb{graded maximal Cohen-Macaulay (MCM)} is a finitely generated, graded $A$ module whose restriction to $Z(A)$ is a graded MCM module.  There are several ways to characterize MCM modules over a commutative ring.  However in this paper, we will only need the following.  Suppose that $R \subset Z(A)$ is a graded polynomial subring of $Z(A)$ (not necessarily standard-graded) such that $Z(A)$ is a finitely generated module over $R$.  Then a $Z(A)$-module is MCM over if and only if it is free when viewed as an $R$-module.

Let $T$ be a commutative $\bk$-algebra.  Then we form the graded ring $A_T := T \tensor_\bk A$.  Observe that $Z(A_T) = T \tensor_\bk Z(A)$ and moreover, $A_T$ is a finitely generated module over $Z(A_T)$.  Indeed, if $a_1,\dotsc,a_r$ are generators for $A$ over $Z(A)$, then their images (also denoted) $a_1,\dotsc,a_r$ in $A_T$ generated $A_T$ over $Z(A_T)$.  We also note that we get a subring $R_T := T \tensor_\bk R \subset Z(A_T)$ over which $Z(A_T)$ (and $A_T$) are module-finite.  Since $R$ is a graded polynomial ring over $\bk$, $R_T$ is a graded polynomial ring over $T$.

Fix a finite-dimensional graded $\bk$-vector space $V_\bt$.  We consider $\End_\bk(V_\bt)$ as a graded ring where a linear transformation $\psi:V_\bt \to V_\bt$ is homogeneous of degree $m$ if and only if $\psi(V_r) \subset V_{r+m}$ for all $r$.

\begin{definition}
Let $T$ be a commutative $\bk$-algebra.  A \emphb{$T$-flat family of $V_\bt$-framed graded MCM $A$ modules} is a graded, $A_T$-module $M$ together with an isomorphism $V_\bt \tensor R_T \to M$ of graded $R_T$-modules.
\end{definition}

We now define a functor $\RepRAV$ from the category of commutative $\bk$-algebras to sets:
\[ \RepRAV(T) = \{\text{$T$-flat families of $V_\bt$-framed $A$-modules}\}. \]
Consider the algebraic group $\GV = \Aut_R(V_\bt \tensor R)_0$ of degree-preserving $R$-module automorphisms of the free, graded $R$-module $(V_\bt \tensor R)$.  This group fits into an exact sequence
\[ 0 \to \oplus_{i > j} \Hom_\bk(V_i, V_j \tensor R_{i-j}) \to \GV \to \prod_{i \in \Z} \GL(V_i) \to 1. \]

\begin{prop}\label{Rep-gr-modular}
The functor $\RepRAV$ is represented by an affine variety of finite type equipped with an action of the algebraic group $\GV$.  
\end{prop}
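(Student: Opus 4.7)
The plan is to realize $\RepRAV$ as a closed subscheme of a finite-dimensional affine space over $\bk$, where the equations encode the finitely many $R$-module relations defining $A$ together with the multiplicativity and unitality of a ring homomorphism. Set $E = \End_R(V_\bt \tensor R)$, so that $\End_{R_T}(V_\bt \tensor R_T) = E \tensor_\bk T$ (each graded piece of $E$ is finite-dimensional, so this identification holds in every degree). A $T$-point of $\RepRAV$ is then equivalent to a graded $R_T$-algebra homomorphism $\rho : A_T \to E \tensor_\bk T$, and since $R$ is central in $A$, this is the same as a degree-preserving $\bk$-linear map $\rho : A \to E \tensor_\bk T$ that is $R$-linear (via the structural map $R \to E$), multiplicative, and sends $1$ to $1$.

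Next I would choose a finite-dimensional, graded $\bk$-subspace $U \subset A$ containing $1$ that generates $A$ as an $R$-module; such a $U$ exists because $A$ is module-finite over $R$. Any candidate $\rho$ is determined by its restriction $\phi := \rho|_U : U \to E \tensor_\bk T$. Each graded piece $E_n = \bigoplus_{i,j} \Hom_\bk(V_i, V_j) \tensor R_{n+i-j}$ is finite-dimensional, since $V_\bt$ is finite-dimensional and each graded component of the polynomial ring $R$ is finite-dimensional. Consequently the functor sending a commutative $\bk$-algebra $T$ to the set of degree-preserving $\bk$-linear maps $U \to E \tensor_\bk T$ is represented by a finite-dimensional affine space $W$ over $\bk$, with underlying $\bk$-vector space the finite direct sum $\bigoplus_n \Hom_\bk(U_n, E_n)$.

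I would then cut out $\RepRAV$ inside $W$ by three kinds of closed conditions. For well-definedness, let $K$ be the kernel of the $R$-module surjection $R \tensor U \to A$; by Noetherianity of $R$, $K$ is finitely generated as an $R$-module by homogeneous elements $\xi_1, \dotsc, \xi_N$ with $\xi_j = \sum_i r_{ij} \tensor u_{ij}$, and imposing $\sum_i r_{ij}\phi(u_{ij}) = 0$ in $E \tensor_\bk T$ amounts to finitely many $\bk$-linear equations on $\phi$. For multiplicativity, fix a $\bk$-basis $u_1, \dotsc, u_m$ of $U$ and expand each product $u_i u_j = \sum_k r_{ij}^k v_k$ with $r_{ij}^k \in R$ and $v_k \in U$; imposing $\phi(u_i)\phi(u_j) = \sum_k r_{ij}^k \phi(v_k)$ gives finitely many quadratic equations on $\phi$. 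Finally unitality is the single linear equation $\phi(1) = 1 \in E$. Together these cut out a closed subscheme of $W$ that is automatically affine and of finite type over $\bk$.

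The group $\GV$ acts on $E$ by conjugation, hence on $W$ by post-composition with the induced automorphism of $E$, and each of the three defining equation systems is manifestly $\GV$-invariant, so the action restricts to $\RepRAV$. The step I expect to demand the most care is the verification that a $\phi \in W(T)$ satisfying the well-definedness equations really does extend uniquely to an $R_T$-linear map $A_T \to E \tensor_\bk T$, and that the multiplicativity and unitality conditions imposed only on basis elements of $U$ promote to the analogous conditions on all of $A_T$. Both reductions rely on flatness of $- \tensor_\bk T$ over $\bk$ together with the finite presentation of $A$ as an $R$-module and the identity $R \cdot U = A$.
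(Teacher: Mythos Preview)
Your proposal is correct and follows essentially the same route as the paper. The paper fixes a graded-free $R$-module presentation $\bigoplus R(b_i)\to\bigoplus R(a_i)\to A\to 0$ and embeds $\RepRAV$ into the affine space $X=\Hom_R(\bigoplus R(a_i),\End_\bk(V_\bt)\tensor R)_0$, cutting it out by exactly your three families of equations (vanishing on the relations, unitality, and multiplicativity on generators); your choice of a generating graded subspace $U\subset A$ in place of a free cover is only a cosmetic repackaging of the same data, and your $W$ is the paper's $X$.
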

\begin{proof}
Fix a graded-free presentation of $A$ as an $R$ module 
\[ \xymatrix{ \bigoplus_{i=1}^s{ R(b_i) } \ar[r]^{\tau} & \bigoplus_{i=1}^r{ R(a_i) } \ar[r]^(.7)\sigma & A \ar[r] & 0.} \]
Consider the affine space
\[ X = \Hom_R( \oplus_{i=1}^r { R(a_i)}, \End_\bk(V_\bt) \tensor R )_0. \]
Note that $\GV$ naturally acts on $X$.  There is a morphism of functors
\[ \RepRAV \to X \]
where, by abuse of notation, we denote by $X$ both the functor represented by $X$ and $X$ itself.  Let $T$ be a test ring.  Then $X(T) = \Hom(\spec(T),X)$ is precisely the set of $R$-module maps 
\[ \oplus_{i=1}^r{R(a_i)} \to \End(V_\bt)\tensor_\bk R \tensor T. \] 
Suppose we are given an $R$-algebra morphism $\alpha:A \to \End_\bk(V_\bt)\tensor R \tensor T$.  By composing $\alpha$ with the presentation map we obtain an $R$-module morphism $\oplus_{i=1}^r{ R(a_i) } \to \End_\bt(V_\bt) \tensor R \tensor T$ and thus an element of $X(T)$.  It is clear that this construction is natural.  

Now, we claim that $\RepRAV \to X$ is a closed embedding.  Indeed, suppose given an $R$-module map
\[ \phi:\bigoplus_{i=1}^r{ R(a_i)} \to \End(V_\bt)\tensor R \tensor T \]
and a morphism $f:T \to T'$.   Notice that $\RepRAV(T') \to X(T')$ is injective.  So $f^*(\phi)$ is in its image if and only if the composite map
\[ f^*(\phi): \bigoplus_{i=1}^r{ R(a_i)} \to \End(V_\bt) \tensor R \tensor T' \]
factors through an $R$-algebra map
\[ A \to \End(V_\bt) \tensor R \tensor T'. \]
This amounts to two vanishing conditions.  First of all the composite map
\[ \bigoplus_{i=1}^s{ R(b_i)} \to \End(V_\bt) \tensor R \tensor T' \]
is zero.  Let $\beta_i$ be a generator of $R(b_i)$.  Then $f^*(\phi)$ factors through $A$ as an $R$-module map if and only if $\phi(\tau(\beta_i)) \in \End(V_\bt) \tensor R \tensor T$ maps to zero in $\End(V_\bt) \tensor R \tensor T'$.  Now, $\phi(\tau(\beta_i)) \in (\End(V_\bt) \tensor R)_{b_i} \tensor T$, which is a free summand of $\End(V_\bt)\tensor R \tensor T$.  Hence, there is a well defined subspace $W_i \subset T$ such that $f^*(\phi)(\tau(\beta_i)) = 0$ if and only if $f(W_i) = 0$. 

So assume that $f$ annihilates the ideal $(W_i : i = 1,\dotsc,s ) \subset T$.  Then $f^*(\phi)$ factors through an $R$-module map
\[ f^*(\phi): A \to \End(V_\bt)\tensor R \tensor T'. \]
For this to be an $R$-algebra map, it must satisfy two conditions.  First, $f^*(\phi)(1_A) = \id \tensor 1 \tensor 1$.  This means that $\phi(1_A) - \id \tensor 1 \tensor 1$ must map to zero under $f$.  Again there is a well defined subspace $U$ of $T$ such that $f^*(\phi)(1_A) = \id \tensor 1 \tensor 1$ if and only if $f(U) = 0$.  Second, we must have 
\begin{equation}\label{eq:multiplicative}
f^*(\phi)(a b) = f^*(\phi)(a)f^*(\phi)(b).
\end{equation}
Let $\alpha_i$ be a generator of $R(a_i)$ and suppose for each $i,j$ we have
\[ \sigma(\alpha_i) \sigma(\alpha_j) = \sigma( \sum{ c^l_{ij} \alpha_l } ) \]
Then \eqref{eq:multiplicative} holds if and only if 
\[ \phi(\sigma(\alpha_i))\phi(\sigma(\alpha_j)) - \phi( \sigma( \sum{ c^l_{ij} \alpha_l } ) ) \]
maps to zero under $f$.  Once again, each equation determines a subspace $U_{ij}$ such that the element above vanishes in $\End(V_\bt) \tensor R \tensor T'$ if and only if $f(U_{ij}) = 0$.

Putting all of these considerations together we find that $f^*(\phi) \in \RepRAV(T')$ if and only if $f$ annihilates the ideal $(W_i,U,U_{ij}) \subset T$.  Hence $\RepRAV$ is a closed subfunctor of $X$ and thus representable.  We notice that since $\GV$ acts on $X$ via algebra automorphisms of $\End(V_\bt) \tensor R$, $\RepRAV$ is preserved by the action of $\GV$.
\end{proof}

The following Proposition adapts \cite{V} (see \cite{G}) to our situation.

\begin{prop}\label{prop-tangent-sequence}
Let $p \in \RepRAV$ be a $\bk$-point.  Then there is an exact sequence
\[ 0 \to \End_A(M_p)_0 \to \End_R(V_\bt \tensor R)_0 \to T_p \RepRAV \to \Ext^1_A(M_p,M_p)_0 \to 0 \]
\end{prop}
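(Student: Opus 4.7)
The plan is to identify $T_p\RepRAV$ with first-order deformations of the $A$-module structure on the underlying graded $R$-module of $M_p$, recognize the map from $\End_R(V_\bt\tensor R)_0$ as the infinitesimal orbit map for the $\GV$-action, and then identify the cokernel by the standard dictionary between $1$-cocycles and Yoneda extensions.

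First I would apply Proposition~\ref{Rep-gr-modular} to rewrite $T_p\RepRAV$ through the functor of points: a tangent vector is a $\bk[\epsilon]/(\epsilon^2)$-valued point of $\RepRAV$ lifting $p$, that is, a graded algebra map $\tilde\alpha: A \to \End_R(V_\bt \tensor R \tensor \bk[\epsilon]/(\epsilon^2))$ reducing modulo $\epsilon$ to the structure map $\alpha$ of $p$. Writing $\tilde\alpha = \alpha + \epsilon\,\delta$ with $\delta: A \to \End_R(V_\bt \tensor R)$ a graded $\bk$-linear map of degree $0$, multiplicativity modulo $\epsilon^2$ translates into the derivation identity $\delta(ab) = \alpha(a)\delta(b) + \delta(a)\alpha(b)$, and the unital condition forces $\delta(1)=0$. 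Thus $T_p\RepRAV$ is identified canonically with the space $Z^1$ of degree-$0$ graded Hochschild $1$-cocycles on $A$ valued in the $A$-bimodule $\End_R(M_p)$, where the bimodule structure comes from $\alpha$.

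Next, I would identify the map $\End_R(V_\bt\tensor R)_0 \to T_p\RepRAV$ with the differential at the identity of the orbit map $\GV \to \RepRAV$, $g \mapsto g \cdot p$. Since $\GV$ acts by conjugation on the ambient affine space $X$ constructed in the proof of Proposition~\ref{Rep-gr-modular}, this differential sends $\xi$ to the inner derivation $a \mapsto \xi\alpha(a) - \alpha(a)\xi$. Its kernel is the centralizer of $\alpha(A)$ inside $\End_R(V_\bt \tensor R)_0$, which is exactly $\End_A(M_p)_0$, yielding exactness at the second term; the image is the subspace $B^1$ of inner $1$-cocycles.

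It remains to identify the cokernel $Z^1/B^1$ with $\Ext^1_A(M_p,M_p)_0$. Given a cocycle $\delta$, form the graded $A$-module $E_\delta$ whose underlying graded $R$-module is $M_p \oplus M_p$ with $A$-action $a \cdot (m,m') = (\alpha(a)m + \delta(a)m', \alpha(a)m')$; the cocycle identity makes this an associative action, and the resulting sequence $0 \to M_p \to E_\delta \to M_p \to 0$ is a degree-$0$ extension. Conversely, any degree-$0$ $A$-extension of $M_p$ by $M_p$ splits as graded $R$-modules because $M_p$ is free over the polynomial subring $R$ (this is precisely where the MCM hypothesis is used), and the resulting derivation depends on the chosen splitting only up to an element of $B^1$ since two $R$-splittings differ by an element of $\End_R(V_\bt\tensor R)_0$. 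The main obstacle is to verify that these two constructions are mutually inverse and that everything is compatible with the grading; once this is checked, the $R$-freeness of $M_p$ gives surjectivity onto $\Ext^1_A(M_p,M_p)_0$ and the four-term exact sequence follows.
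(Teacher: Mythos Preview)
Your proposal is correct and follows essentially the same approach as the paper: both identify tangent vectors with first-order deformations of the $A$-action on the fixed underlying $R$-module (the paper phrases this as framed $A[\epsilon]$-modules, you as degree-$0$ Hochschild $1$-cocycles, which are equivalent via $\tilde\alpha=\alpha+\epsilon\delta$), both identify the middle map as the infinitesimal orbit map producing inner derivations, and both use the graded $R$-freeness of $M_p$ to split an arbitrary extension over $R$ and thereby obtain surjectivity onto $\Ext^1_A(M_p,M_p)_0$. One small point worth making explicit in your write-up is that $\tilde\alpha$ must be an $R$-algebra map (not merely a $\bk$-algebra map), so that $\delta$ vanishes on $R$ and is $R$-bilinear; this is implicit in your appeal to Proposition~\ref{Rep-gr-modular} but deserves a word.
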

\begin{proof}
We begin by observing that $\End_R(V_\bt \tensor R)_0 = \Lie( \GV )$ and take for the middle map the map associated to the action of $\GV$ on $\RepRAV$.  Explicitly, given an endomorphism $\phi \in \End_R(V_\bt \tensor R)_0$, we construct a framed $A[\epsilon]$-module as follows.  Let $\alpha:A \to \End(V_\bt)\tensor R$ be the action map.  Then the action of $A$ on $V_\bt \tensor R[\epsilon]$ corresponding to $\phi$ is
\begin{equation}\label{eq:action} a( v + \epsilon w ) = a v + \epsilon( \phi(av)-a\phi(v) + aw ). \end{equation}
Notice that if $\phi \in \End_A(M_p)$ then $\phi(av) - a \phi(v) = 0$.  Hence the action of $A$ on $V_\bt \tensor R[\epsilon]$ is split compatible with the framing.  Therefore the image of $\phi$ in $T_p \RepRAV$ is zero.

Turning to the second map, recall that the point $p$ represents an $A$-module structure on $V_\bt \tensor R$.  We denote this $A$-module by $M$.  By the modular description of $\RepRAV$ given in Proposition \ref{Rep-gr-modular}, we identify the space $T_p \RepRAV$ with the collection of $V_\bt$-framed MCM $A[\epsilon]$-modules (where $\epsilon^2 = 0$) $M_\epsilon$ such that $M_\epsilon/\epsilon M_\epsilon \cong M$ as framed $A$-modules.  Now, any such module fits into an exact sequence
\[ 0 \to \epsilon M_\epsilon \to M_\epsilon \to M_\epsilon / \epsilon M_\epsilon \to 0. \]
So we obtain a map $T_p \RepRAV \to \Ext^1_A(M_p,M_p)_0$.  

It remains to show that these maps give rise to an exact sequence.  First, suppose that 
\[ 0 \to M_p \to N \to M_p \to 0 \]
represents a given class $\eta \in \Ext^1_A(M_p,M_p)_0$.  Denote by $\epsilon$ the endomorphism obtained by composing
\[ N \to M_p \to N \]
the second, then first maps in the exact sequence representing $\eta$.  Since $\epsilon$ is an $A$-module map, this equips $N$ with the structure of an $A[\epsilon]$-module.  Since $M_p$ is free as a graded $R$-module, we can find an $R$-module splitting $s:M_p \to N$.  This induces a map $V_\bt \tensor R \to N$ and using the action of $\epsilon$, we obtain a framing $V_\bt \tensor R[\epsilon] \to N$.  By construction this reduces to the original framing modulo $\epsilon$.  Hence this framed $A[\epsilon]$-module represents an element of $T_p \RepRAV$ whose image in $\Ext^1_A(M_p,M_p)_0$ is $\eta$.  

Next, suppose that $A$ acts on $V_\bt \tensor R[\epsilon]$ in such a way that 
\[ 0 \to \epsilon V_\bt \tensor R \to V_\bt \tensor R[\epsilon] \to V_\bt \tensor R \to 0 \]
is split as an exact sequence of $A$-modules.  Let $s = \id + \epsilon \phi$ be a splitting, where we identify $V_\bt \tensor R[\epsilon] = V_\bt \tensor R \oplus \epsilon V_\bt \tensor R$.  Since $s$ is an $A$-module map we have
\[ av + \epsilon \phi(av) = s(av) = a \ast s(v) = a\ast v + \epsilon a \phi(v) \]
where we write $\ast$ for the action of $A$ on $V_\bt \tensor R[\epsilon]$.  Hence
\[ a \ast v = av + \epsilon( \phi(av) - a \phi(v) ). \]
Hence the element of $T_p \RepRAV$ corresponding to the action under consideration is in the image of $\End_R(V_\bt \tensor R)_0 = \Lie(\GV)$.

Finally, suppose that $\phi \in \End_R(V_\bt \tensor R)_0$ maps to zero in $T_p \RepRAV$.  Then the $A$-module structure on $V_\bt \tensor R[\epsilon]$ satisfies $a( v + \epsilon w) = av + \epsilon aw$.  Comparing this to \eqref{eq:action}, we find that $\phi(av) = a \phi(v)$ so that $\phi \in \End_A(V_\bt \tensor R)_0$.
\end{proof}

\begin{thm}
For each polynomial $H(t) \in \Q[t]$ there are finitely many isomorphism classes of rigid, graded, MCM $A$-modules with Hilbert polynomial $H(t)$.
\end{thm}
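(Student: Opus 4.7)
The plan is to reduce to a fixed framing $V_\bt$ and then apply the representation scheme $\RepRAV$ together with the tangent sequence of Proposition~\ref{prop-tangent-sequence}. Any graded MCM $A$-module $M$ is graded-free over $R$, so $M \cong V_\bt \tensor R$ for a uniquely determined graded $\bk$-vector space $V_\bt$; the Hilbert series of $M$ is the product of the Hilbert series of $V_\bt$ and of $R$, and from $H(t)$ one can read off both $\dim V_\bt$ and a bound on the range of degrees that can occur in $V_\bt$. Hence only finitely many $V_\bt$ can yield Hilbert polynomial $H(t)$, and it suffices to prove that for each such $V_\bt$ there are only finitely many rigid isomorphism classes of MCM $A$-modules of type $V_\bt$.

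For a fixed $V_\bt$, Proposition~\ref{Rep-gr-modular} presents $\RepRAV$ as an affine scheme of finite type over $\bk$ carrying a $\GV$-action whose orbits on $\bk$-points parametrize the isomorphism classes of graded MCM $A$-modules of type $V_\bt$. Proposition~\ref{prop-tangent-sequence} identifies the cokernel of the infinitesimal orbit map $\End_R(V_\bt \tensor R)_0 \to T_M \RepRAV$ with $\Ext^1_A(M,M)_0$. When $M$ is rigid we have $\Ext^1_A(M,M) = 0$, so in particular its degree-zero part vanishes and the orbit map has surjective differential at $M$. Equivalently, $T_M(\GV \cdot M) = T_M \RepRAV$, so the $\GV$-orbit of $M$ is open in $\RepRAV$.

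Finally, $\GV$ is connected, being an extension of the connected group $\prod_i \GL(V_i)$ by the unipotent shift subgroup appearing in the exact sequence just before Proposition~\ref{Rep-gr-modular}, and $\RepRAV$ has finitely many irreducible components since it is of finite type. An irreducible variety acted on by a connected algebraic group contains at most one open orbit, so $\RepRAV$ admits only finitely many open $\GV$-orbits; this gives finitely many rigid isomorphism classes of type $V_\bt$, and summing over the finitely many admissible $V_\bt$ completes the proof. The only delicate step is the first reduction, which requires some care about the possibly non-standard grading on $R$ in order to bound the range of degrees in $V_\bt$ from $H(t)$; the remaining steps follow directly from Proposition~\ref{prop-tangent-sequence} and the standard fact that a connected algebraic group acting on a finite-type variety has only finitely many open orbits.
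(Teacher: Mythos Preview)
The representation-theoretic heart of your argument---that a rigid module gives an open $\GV$-orbit in $\RepRAV$, and that a finite-type scheme carries only finitely many open orbits---is correct and is exactly the paper's proof; you are a bit more explicit about the connectedness of $\GV$, while the paper just says the orbit is ``dense in its component'', but the content is the same.

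The genuine gap is in your first reduction. It is not true that the Hilbert polynomial $H(t)$ bounds the range of degrees appearing in $V_\bt$. Take $A = R = \bk[x,y]$ with the standard grading: the free modules $R(-n)\oplus R(n)$ for $n \geq 0$ all have Hilbert polynomial $2t+2$, are rigid, and are pairwise non-isomorphic, so infinitely many $V_\bt$---indeed infinitely many rigid graded MCM modules---share this $H(t)$, and the theorem as literally stated already fails here. The paper's own proof makes an even stronger claim at this step, asserting that two graded MCM $A$-modules with the same Hilbert polynomial are isomorphic over $R$, which the same example refutes. Both the statement and the proofs become correct if ``Hilbert polynomial $H(t)$'' is replaced by ``Hilbert series'' (equivalently, by the type $V_\bt$, as in the paper's Main Theorem); then $V_\bt$ is uniquely determined and your reduction is to a single framing rather than to finitely many.
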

\begin{proof}
Two graded, MCM $A$-modules have the same Hilbert polynomial if and only if they are isomorphic over $R$.  Hence, the set of MCM $A$-modules with fixed Hilbert polynomial is parameterized by $\RepRAV$ for a particular $V_\bt$.  Since $\RepRAV$ has finite type over $\bk$, it has finitely many irreducible components.  Suppose that $p \in \RepRAV$ corresponds to a rigid $A$-module.  Then the infinitesimal action map $\Lie(\GV) \to T_p \RepRAV$ is surjective.  Hence, the orbit $\GV$ of $p$ is dense in its component.  We conclude that there can be only finitely many isomorphism classes of rigid MCM $A$-modules with a given Hilbert polynomial, at most the number of components of $\RepRAV$.
\end{proof}


\section{A finiteness theorem}
Let $A$ be a graded $\bk$-algebra as in Section \ref{sec:main-constructions}, with $R \subset Z(A)$ a graded subring over which $A$ is module-finite.  Furthermore, assume that $A$ is connected, so that the unit map $\bk \to A_0$ is an isomorphism.  Once again, unadorned tensor products are over the base field $\bk$.

Put $R_+ = \oplus_{m > 0} R_m$.  Given a finitely generated, graded $R$-module $M$ we define the quantities
\begin{align*}
g_{min}(M) & = \min\{m : (M/R_+M)_m \neq 0\}, \\
g_{max}(M) & = \max\{m : (M/R_+M)_m \neq 0\}, \\
w(M) & = g_{max} - g_{min}. 
\end{align*}
We extend these definitions to graded $A$-modules by viewing them as graded $R$-modules.  A graded MCM $A$-module is called \emphb{simple} if it does not admit any proper, nonzero MCM quotient modules.  Given an MCM $A$-module, graded or not we define $r(M) = \rank_R(M)$, the rank of $M$ as an $R$-module.  Since $A$ is module-finite over $R$ this quantity is finite.  Finally, for a graded $\bk$ vector space $V_\bt$ we say that a graded, MCM $A$-module $M$ has \emphb{type $V_\bt$} if $M/R_+ M \cong V_\bt$ as graded vector spaces.  The following finiteness result appears in Karroum's thesis.  
\begin{thm*}[\cite{K}]
Suppose that $A$ is commutative.  For each $r \geq 0$, there exists a natural number $\delta_r$ such that if $M$ is a simple, graded MCM $A$-module with $r(M) = r$ then $w(M) < \delta_r$.
\end{thm*}

While the result is stated for commutative rings, no substantial modification of the proof is needed to extend it to the non-commutative setting.  The main result of this section is to extend the previous theorem to indecomposable modules.  It does require that $A$ is commutative and furthermore that it has an isolated singularity.  We say that $A$ has an \emphb{isolated singularity} if for every prime ideal $\pfr \neq A_+$, the localization $A_\pfr$ is regular.  
\begin{thm}\label{thm:main.finiteness}
Assume that $A$ is commutative, with an isolated singularity.  For each $r > 0$ there exists $\alpha_r > 0$ such that if $M$ is an indecomposable, graded MCM $A$-module then $w(M) < \alpha_{r(M)}$.
\end{thm}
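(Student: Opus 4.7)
The plan is to combine Karroum's theorem on simple MCM modules with the isolated singularity hypothesis via a simple-MCM filtration and an $\Ext$-vanishing argument.  Given an indecomposable graded MCM $A$-module $M$ of rank $r$, I would first construct a filtration $0 = M_0 \subset M_1 \subset \cdots \subset M_k = M$ whose subquotients $S_i := M_i/M_{i-1}$ are simple graded MCM modules.  The depth lemma over $R$ ensures that the kernel of a surjection between MCM modules is MCM, so such a filtration exists by iteratively extracting a simple MCM quotient; additivity of rank forces $k \leq r$ and each $r(S_i) \leq r$, so Karroum's theorem yields $w(S_i) < \delta_r$ (enlarging $\delta_r$ to be monotone in $r$ if necessary).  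Because each $S_i$ is $R$-free, tensoring a filtration step with $R/R_+$ is exact and gives a short exact sequence
\[ 0 \to M_{i-1}/R_+M_{i-1} \to M_i/R_+M_i \to S_i/R_+S_i \to 0. \]
Induction on $i$ shows that the support of $M/R_+M$ in $\Z$ equals the union of the supports of the $S_i/R_+S_i$; writing $n_i := g_{\min}(S_i)$, this implies
\[ w(M) \;\leq\; (\max_i n_i - \min_i n_i) + \delta_r, \]
so it suffices to bound the spread $\sigma := \max_i n_i - \min_i n_i$ in terms of $r$.

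To control $\sigma$ I would use the indecomposability of $M$.  Suppose a uniform bound $D(r)$ is available such that for any two simple MCM modules $S, S'$ of rank $\leq r$ normalized so that $g_{\min}(S) = g_{\min}(S') = 0$, one has $\Ext^1_A(S, S'(n))_0 = 0$ whenever $|n| > D(r)$.  If $\sigma > (r-1) D(r)$ then the sorted sequence of the $n_i$ must contain a gap exceeding $D(r)$ by pigeonhole, partitioning the simples into a ``low-shift'' block $I_\ell$ and a ``high-shift'' block $I_h$ with $\Ext^1_A(S_a, S_b)_0 = 0$ for every $a, b$ spanning the partition in either direction.  The vanishing in one direction lets me bubble-sort the filtration so that the simples in $I_\ell$ appear first: swapping an adjacent pair $(S_a, S_b)$ with $a \in I_h$ preceding $b \in I_\ell$ is legitimate since $\Ext^1_A(S_b, S_a)_0 = 0$ makes the middle short exact sequence split.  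Once sorted, a d\'evissage through the filtration shows $\Ext^1_A(M/M_{|I_\ell|}, M_{|I_\ell|})_0 = 0$, so $M$ decomposes as $M_{|I_\ell|} \oplus M/M_{|I_\ell|}$, contradicting indecomposability.  This yields $\sigma \leq (r-1)D(r)$, and hence the bound $\alpha_r = (r-1)D(r) + \delta_r$.

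The technical heart, and main obstacle, is to establish the uniform Ext-vanishing bound $D(r)$.  The isolated singularity assumption already makes each $\Ext^1_A(S, S')$ a finite-length graded $\bk$-module, so the vanishing holds for any individual pair.  For uniformity I would invoke Proposition \ref{Rep-gr-modular}: up to isomorphism there are only finitely many graded vector spaces $V_\bt$ of total dimension at most $r$ supported in the interval $[0, \delta_r)$, and for each such $V_\bt$ the scheme $\RepRAV$ is of finite type over $\bk$.  One then needs to extract from the finite-typeness a uniform upper bound on the degrees of the first syzygies appearing in minimal graded $A$-free resolutions of MCM modules in $\RepRAV$---a Castelnuovo--Mumford-type regularity statement for the universal family---which in turn bounds the graded support of $\Ext^1_A$ uniformly and yields the required $D(r)$.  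Turning the finite-typeness of $\RepRAV$ into this quantitative regularity statement is the step I expect to require the most care.
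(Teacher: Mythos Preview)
Your architecture is the paper's: a uniform $\Ext^1$-vanishing bound for MCM modules whose generator degrees are far apart, then pigeonhole on the support of $M/R_+M$ to find a gap, then split. Two points of comparison.

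First, the paper's execution is more direct. It never passes to a full simple-MCM filtration or rearranges anything. Lemma~\ref{lem:submodule} already says that a gap of length $\geq g_{\max}(A)$ in the support of $M/R_+M$ makes the $R$-submodule $M' = \sum_{i \leq i_0} R\,M_i$ automatically an $A$-submodule with MCM quotient, so one gap yields the candidate splitting immediately. Correspondingly, the paper's $\Ext$-vanishing result (Lemma~\ref{lem:ext-degrees}) is stated for \emph{arbitrary} MCM $M,N$; the reduction to simples is carried out by induction on $(r(M),r(N))$ inside that lemma, which absorbs your d\'evissage and renders the bubble-sort unnecessary. Your route works, but it reproves Lemma~\ref{lem:submodule} in a roundabout way.

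Second, and this is the substantive gap: your proposed route to $D(r)$ via syzygy regularity does not close by itself. Bounding the generator degrees $a_{1,j}$ of $F_1$ in a resolution $F_1 \to F_0 \to M$ over the universal family is fine (and the paper does exactly this), but $\Hom_A(F_1,N) \cong \bigoplus_j N(-a_{1,j})$ is nonzero in all sufficiently large degrees since $N$ is MCM, so knowing the $a_{1,j}$ alone says nothing about where the subquotient $\Ext^1_A(M,N)$ sits. The isolated singularity makes $\Ext^1$ finite length, but you need a \emph{uniform} annihilator independent of $M,N$. The paper gets this from a result of Yoshino: for an isolated singularity there is a fixed artinian quotient $R \twoheadrightarrow \bar R$ (set $\bar A = A \otimes_R \bar R$) such that $\Ext^1_A(M,N) \hookrightarrow \Ext^1_{\bar A}(M \otimes_R \bar R,\, N \otimes_R \bar R)$ for all MCM $M,N$. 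Combined with the universal syzygy bound, this embeds $\Ext^1$ into $\bigoplus_j W_\bullet \otimes \bar R(-a_{1,j})$, a finite-dimensional graded vector space depending only on $V_\bullet$ and $W_\bullet$, and the uniform degree bound follows. That artinian reduction is the missing ingredient in your sketch; a Castelnuovo--Mumford statement about syzygies will not substitute for it.
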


For the convenience of the reader and to introduce the ideas we will start by outlining a proof of Karroum's Theorem.  Suppose that $M$ is a simple graded MCM $A$-module.  Fix a set of homogeneous algebra generators $a_1,\dotsc,a_\gamma \in A$ for $A$ over $R$.  Define $\alpha = g_{max}(A)$ and note that $\alpha \geq \max\{ \deg(a_i) : i = 1,\dotsc,\gamma \}$.  Consider the finite dimensional, graded $\bk$-vector space $M/R_+ M$.  We note that $\dim_\bk(M/R_+ M) = r(M)$.  The following Lemma gives a way to produce MCM $A$-submodules of MCM $A$-modules.

\begin{lemma}\label{lem:submodule}
Suppose that there is some $i_0$ such that $(M/R_+M)_{i_0+j} = 0$ for $j = 1,\dotsc,\alpha$.  Let $M' = \sum_{i \leq i_0} R M_i$ be the $R$-submodule generated by the part of $M$ in degrees up to $i_0$.  Then $M'$ is preserved by the action of $A$ and both $M'$ and $M/M'$ are MCM.
\end{lemma}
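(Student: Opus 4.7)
The plan is to reduce everything to a bookkeeping argument about homogeneous generators of $M$ over $R$, using the degree gap hypothesis to confine the $A$-action.

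First I would pick a homogeneous $R$-basis of $M$: since $M$ is MCM, it is free over the polynomial ring $R$, so lifting a homogeneous $\bk$-basis of $M/R_+ M$ to $M$ gives a homogeneous $R$-basis $\{e_s\}$ of $M$ with $d_s := \deg(e_s)$. A direct check (using that $R$ is non-negatively graded) shows that the submodule $M' = \sum_{i \le i_0} R M_i$ described in the lemma is exactly the free summand $\bigoplus_{d_s \le i_0} R\, e_s$. Consequently both $M'$ and the complementary graded $R$-module $M/M' \cong \bigoplus_{d_s > i_0} R\, e_s$ are graded-free over $R$, hence MCM over $R$. So the remaining, and only interesting, content of the lemma is that $M'$ is stable under the action of $A$.

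To see $A$-stability, I would use that $A$ is generated as an $R$-algebra by the chosen elements $a_1,\dotsc,a_\gamma$, so it suffices to check $a_k M' \subseteq M'$ for each $k$. Take a homogeneous $m \in M$ with $\deg(m) = d \le i_0$, and write
\[ a_k m = \sum_s r_s\, e_s, \qquad r_s \in R \text{ homogeneous}. \]
If $r_s \neq 0$ then $\deg(r_s) \ge 0$, forcing $d_s \le d + \deg(a_k) \le i_0 + \alpha$, where the last inequality uses $d \le i_0$ and $\deg(a_k) \le \alpha = g_{\max}(A)$. On the other hand, the hypothesis $(M/R_+ M)_{i_0+j} = 0$ for $j = 1,\dotsc,\alpha$ says precisely that no basis element $e_s$ has $d_s \in \{i_0+1,\dotsc,i_0+\alpha\}$. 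So any $s$ with $r_s \neq 0$ satisfies either $d_s \le i_0$ or $d_s > i_0 + \alpha$; the latter is ruled out by the degree bound, giving $d_s \le i_0$ and hence $a_k m \in M'$. Extending $R$-linearly, $M'$ is $A$-stable.

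I do not expect any real obstacle here: the entire argument is degree counting, and the polynomial-ring freeness of $M$ does all the heavy lifting. The one place to be slightly careful is the identification of the two descriptions of $M'$, to make sure the submodule generated by the low-degree part is literally the direct sum of the low-degree free summands (so that the quotient is free, not merely $R$-flat in some weaker sense). Once this identification and the degree inequality $d + \deg(a_k) \le i_0 + \alpha$ are in place, stability under $A$ and the MCM property of both $M'$ and $M/M'$ follow at once.
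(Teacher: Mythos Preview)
Your proposal is correct and follows essentially the same approach as the paper's proof: both use a homogeneous $R$-basis (equivalently, an identification $M \cong \bigoplus_i (M/R_+M)_i \otimes R$) to see that $M'$ and $M/M'$ are free, and both verify $A$-stability by the degree bound $\deg(a_k m) \le i_0 + \alpha$ combined with the generator gap. The only cosmetic difference is that the paper phrases the key step as ``$M'_i \to M_i$ is an isomorphism for $i \le i_0+\alpha$'' rather than ``no basis element $e_s$ has $d_s$ in the gap,'' but these are equivalent observations.
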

\begin{proof}
We observe that the inclusion map $M' \to M$ induces an isomorphism $(M')_i \to M_i$ for $i \leq i_0 + \alpha$.  Let $m \in M'$ be a homogeneous.  Note that we we can express $m$ in terms of elements of bounded degree 
\[ m = \sum_{i\leq i_0}{ r_i m_i }, \quad r_i \in R, \quad m_i \in M'_i, \]
since $M'$ is generated in degress $i_0$ and less as an $R$-module.  Now to show that $M'$ is preserved by $A$ it suffices to show that $a_j M' \subset M'$ for $j=1,\dotsc,\gamma$.  Now we have
\[ a_j m = \sum_{i \leq i_0}{ r_i a_j m_i } \]
and $\deg(a_j m_i) \leq i_0 + \alpha$.  Hence $a_j m_i \in M'$.  The claims that $M'$ and $M/M'$ are MCM follow from the free-ness of $M'$ and $M/M'$.  Choosing an isomorphism $M \cong \oplus_i { (M/R_+)_i \tensor R }$ of $R$-modules, we find that $M' = \oplus_{i \leq i_0}{ (M/R_+ M)_i \tensor R}$ and $M/M' \cong \oplus_{i > i_0}{ (M/R_+ M)_i \tensor R}$. 
\end{proof}

To prove Karroum's theorem we may take $\delta_r = r \alpha + 1$.  Indeed, if $M$ is an MCM $A$-module with $w(M) > r(M) \alpha + 1$ then the pidgeonhole principle implies that there must exist $i_0$ such that $(M/R_+M)_{i_0+j} = 0$ for $j = 1,\dotsc, \alpha$.  Then Lemma \ref{lem:submodule} implies the  existence of an MCM submodule of $M$ with MCM quotient.  So $M$ is not simple.

Assume now that $A$ is commutative, with an isolated singularity.  Then for any two MCM $A$-modules $M,N$, the module $\Ext^1_{A}(M,N)$ is annihilated by a power of $A_+$.  In particular, $\Ext^1_A(M,N)$ is finite dimensional over $\bk$.  The main workhorse for this section is the following Lemma, which controls the largest nonzero graded component of $\Ext^1_A(M,N)$.

\begin{lemma}\label{lem:ext-degrees}
Given a natural numbers $r,s > 0$ there exists an integer $\beta_{r,s} \geq 0$ such that for all graded, MCM $A$-modules $M$ and $N$ with $r(M) \leq r, r(N) \leq s,$ and $g_{min}(M) > g_{max}(N) + \beta_{r,s}$
\[ \Ext^1_A(M,N)_0 = 0. \]
\end{lemma}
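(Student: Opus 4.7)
The plan is a two-step reduction: first filter $M$ and $N$ into MCM subquotients of uniformly bounded width, then exploit the isolated singularity hypothesis on the resulting finite-type representation scheme to bound the top degree of $\Ext^1_A(M,N)$.

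\textbf{Step 1 (reduction to bounded width).} Set $\alpha = g_{max}(A)$. If $w(M) > (r-1)\alpha$, pigeonholing the at most $r$ generator degrees of $M$ yields an index $i_0$ with no generators of $M$ in the window $(i_0, i_0+\alpha]$, and Lemma \ref{lem:submodule} produces a graded MCM extension $0 \to M' \to M \to M'' \to 0$ with $g_{max}(M') \leq i_0 < g_{min}(M'')$. Iterating at most $r$ times, and doing the same for $N$, we obtain filtrations of $M$ and $N$ whose subquotients $M^i, N^j$ are graded MCM with ranks bounded by $r, s$, widths at most $(r-1)\alpha$ and $(s-1)\alpha$, and satisfying $g_{min}(M^i) \geq g_{min}(M)$ and $g_{max}(N^j) \leq g_{max}(N)$. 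Iterating the long exact sequence of $\Ext$ in each argument, nonvanishing of $\Ext^1_A(M,N)_0$ forces $\Ext^1_A(M^i,N^j)_0 \neq 0$ for some pair $(i,j)$, and the gap $g_{min}(M^i) - g_{max}(N^j)$ is at least $g_{min}(M) - g_{max}(N)$. It therefore suffices to prove the lemma for MCM modules whose widths are bounded in terms of $r$ and $s$.

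\textbf{Step 2 (uniform bounds on the representation scheme).} Up to overall grading shift, there are only finitely many graded vector spaces that can play the role of $M/R_+M$ or $N/R_+N$ in the bounded-width class; fix such a normalized pair $V_\bt, W_\bt$ with $g_{min}(V_\bt) = g_{max}(W_\bt) = 0$, and consider the finite-type scheme $X = \RepRAV \times \Rep_R(A,W_\bt)$. The universal $\Ext^1_A(\mathcal{M},\mathcal{N})$ is a coherent graded $A \tensor \mathcal{O}_X$-module, computed as the $H^1$ of $\Hom_{A \tensor \mathcal{O}_X}(F_\bullet,\mathcal{N})$ for a finite graded projective presentation of the universal $\mathcal{M}$; the isolated singularity hypothesis forces each fiber to have finite length, so this sheaf is set-theoretically supported on the closed subscheme cut out by $A_+ \tensor \mathcal{O}_X$. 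Coherence and Noetherianity of $X$ then yield uniform constants $n$ and $G$, depending only on $V_\bt, W_\bt$, such that $A_+^n$ annihilates $\Ext^1_A(M,N)$ and its $A$-module generators lie in degrees at most $G$ for every $(M,N) \in X(\bk)$. Since $A_+$ is maximal, $A/A_+^n$ is finite-dimensional of some top degree $D < \infty$, and elementary degree accounting gives $\Ext^1_A(M,N)_d = 0$ whenever $d > G + D$.

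\textbf{Step 3 (unshifting).} A bounded-width pair in the original class takes the form $M = \tilde{M}(-t_M)$, $N = \tilde{N}(-t_N)$ with $\tilde{M}, \tilde{N}$ normalized as above and $t_M = g_{min}(M), t_N = g_{max}(N)$. Since $\Ext^1_A(M,N)_0 = \Ext^1_A(\tilde{M},\tilde{N})_{t_M - t_N}$, Step 2 yields $\Ext^1_A(M,N)_0 = 0$ whenever $t_M - t_N > G + D$. Setting $\beta_{r,s}$ equal to the maximum of $G + D$ over the finite list of normalized shape pairs completes the argument.

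The principal obstacle is Step 2: establishing uniform bounds on both the $A_+$-annihilator and the generator degrees of $\Ext^1_A(M,N)$ across the finite-type family requires combining the isolated singularity hypothesis (for fiberwise finite length) with Noetherianity of $X$. Step 1 is essential precisely because the relevant parameter space only becomes finite-type after the widths have been bounded.
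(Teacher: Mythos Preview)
Your overall architecture matches the paper's: reduce to bounded-width modules, then use the finite-type representation scheme together with the isolated-singularity hypothesis to bound the nonzero degrees of $\Ext^1$. Your Step~1 is a direct-filtration variant of the paper's inductive reduction to \emph{simple} MCM modules (via Karroum's theorem and the duality $\Ext^1_A(M,N)\cong\Ext^1_A(N^\vee,M^\vee)$); your version is arguably cleaner and achieves the same effect.

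The gap is in Step~2. You assert that ``each fiber has finite length, so this sheaf is set-theoretically supported on $V(A_+\otimes\cO_X)$'' and then that coherence and Noetherianity of $X$ yield a uniform $n$ with $A_+^n\cdot\Ext^1_A(M,N)=0$. Neither inference is justified as written. First, the fiber $\cE\otimes k(p)$ of the universal $\Ext$ sheaf $\cE=H^1(\Hom_{A_X}(\cF_\bullet,\cN))$ need not coincide with $\Ext^1_A(M_p,N_p)$; cohomology does not commute with base change here without an extra argument. Second, even granting that $A_+^n$ annihilates $\cE$, passing this annihilation to the fiberwise groups $\Ext^1_A(M_p,N_p)$ again requires control of base change that you do not provide. (A toy warning: a chain endomorphism inducing zero on $H^1(C^\bullet)$ can induce a nonzero map on $H^1(C^\bullet\otimes k(p))$.)

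The paper sidesteps both issues by invoking a result of Yoshino (\cite{Y}, Prop.~6.17): the isolated-singularity hypothesis yields, \emph{uniformly in $M,N$}, a finite-dimensional quotient $\bar R=R/I$ such that $\Ext^1_A(M,N)\hookrightarrow\Ext^1_{\bar A}(\bar M,\bar N)$. Combined with a single resolution $\cF_\bullet\to\cM$ over $X$, this embeds every $\Ext^1_A(M_p,N_p)$ into the fixed finite-dimensional graded space $\bigoplus_j W_\bt\otimes\bar R(-a_{1,j})$, which immediately bounds the nonvanishing degrees. This is the missing ingredient in your Step~2; once you plug it in, your Steps~1 and~3 go through unchanged.
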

\begin{proof}
We proceed by induction on the pair $(r,s)$ (with the component-wise partial order).  During the induction, we will construct the $\beta$'s so that $\beta_{r',s'} \leq \beta_{r,s}$ if $(r',s') \leq (r,s)$.

Let $M,N$ be graded MCM $A$-modules with $r = \rk(M)$ and $s = \rk(N)$.  Consider an extension 
\[ 0 \to N \to E \to M \to 0. \]
Suppose that $M$ admits a proper, simple, graded MCM quotient $M \onto S$ with kernel $M' \subset M$.  Then $g_{min}(M'),g_{min}(S) \geq g_{min}(M)$.  So if $g_{min}(M) - g_{max}(N) \geq \beta_{r(M'),s},\beta_{r(S),s}$ then $\Ext^1_A(M',N) = \Ext^1_A(S,N)= 0$. From the exact sequence
\[ \Ext^1_A(S,N) \to \Ext^1_A(M,N) \to \Ext^1_A(M',N) \]
we see that $\Ext^1_A(M,N)=0$ as well.

Note that if $M$ is a graded, MCM $A$-module then so is $M^\vee := \Hom_R(M,R)$.  Moreover, there is a canonical isomorphism $\Ext^1(M,N) \cong \Ext^1(N^\vee,M^\vee)$.  Moreover, $g_{max}(M^\vee) = -g_{min}(M)$ and $g_{min}(N^\vee) = - g_{max}(N)$ so 
\[ g_{min}(M) - g_{max}(N) = g_{min}(N^\vee) - g_{max}(M^\vee). \]
So if $N$ is not simple, we can show that $\Ext^1_A(N^\vee,M^\vee) = 0$ by the argument in the previous paragraph and thus $\Ext^1_A(M,N) = 0$, provided $g_{min}(M) - g_{max}(N) > \max\{\beta_{r,s'} : s' \leq s\}$.

It remains to consider the case where $M$ and $N$ are simple.  Let $V_\bt = (M/R_+ M)(-g_{min}(M))$ and $W_\bt = (N/R_+ N)(-g_{min}(N))$.  We shift these graded vector spaces to normalize them so that the lowest nonzero component is in degree zero.  By Karroum's Theorem, $w(M),w(N) \leq w : =\max\{\delta_r,\delta_s\}$.  This means that there are only finitely many possibilities for the pair $(V_\bt,W_\bt)$.  In light of this we claim that it suffices to show that for any pair $V_\bt,W_\bt$ of graded vector spaces, there is a bound $m_{V_\bt,W_\bt}$ only depending on $V_\bt$ and $W_\bt$ such that $\Ext^1_A(M,N)_j = 0$ for all $M$ of type $V_\bt$, $N$ of type $W_\bt$, and $j$ with $|j| > m_{V_\bt,W_\bt}$.   Indeed, if $|j| > m_{V_\bt,W_\bt}$ then 
\[ \Ext^1_A(M(-g_{min}(M)),N(-g_{min}(N)))_j = \Ext^1_A(M,N)_{j + g_{min}(M)-g_{min}(N) } = 0. \]
Since $g_{min}(M) - g_{min}(N) \geq g_{min}(M) - g_{max}{N}$, it follows that if $g_{min}(M) > g_{max}(N) + m_{V_\bt,W_\bt}$ then $\Ext^1_A(M,N)_0 = 0$.   After making these reductions we see that 
\[ \beta_{r,s} = \max\{ \beta_{r',s'}, m_{V_\bt,W_\bt} : (r',s') < (r,s), \dim(V_\bt) = r, \dim(W_\bt) = s, w(V_\bt), w(W_\bt) \leq w \} \]
has the desired property.

Form $X = \Rep_A(V_\bt) \times \Rep_A(W_\bt),$ which is an affine scheme of finite type.  We have two tautological flat families of graded, MCM $A$-modules $\cM$ and $\cN$ via the projections.  We view these as graded sheaves on $X$, equipped with a homogenous action of $A \tensor \cO_X$.  

Let $\cF_\bt \to \cM$ be a resolution of $\cM$ where $\cF_i = \oplus_{j=1}^{b_i}{\cO_X \tensor A(a_j) }$.  Then for each $p \in X$, we obtain a graded-free resolution
\[ \cF_\bt|_p \to \cM|_p \]
of the graded $A$-module $\cM|_p$.  

By \cite{Y}, Proposition 6.17 (adapted to the graded setting), there exists a graded quotient ring $R \onto \Rbr$ with the following properties.  Put $\Abr = A \tensor_R \Rbr$.  Then $\dim_\bk(\Rbr) < \infty$ and for any graded, MCM $A$-modules $M,N$ the map
\[ \Ext^1_A(M,N) \to \Ext^1_{\Abr}(M \tensor_R \Rbr, N \tensor_R \Rbr) \]
is injective.  

Now for each $p \in X$, we note that $\cF_\bt|_p \tensor_R \Rbr \to \cM|_p \tensor_R \Rbr$ is a graded-free resolution of $\cM|_p \tensor_R \Rbr$.  Putting it all together, $\Ext^1_A(\cM|_p,\cN|_p)$ embeds in $\Ext^1_{\Abr}(\cM|_p \tensor_R \Rbr,\cN|_p \tensor_R \Rbr)$, which is a subquotient of
\begin{multline*} \Hom_{\Abr}(\cF_1|_p \tensor_R \Rbr,\cN_p \tensor_R \Rbr) = \Hom_{\Abr}(\oplus_{j=1}^{b_1} \Abr(a_{1,j}),\cN|_p\tensor_R \Rbr) = \\ 
\oplus_{j=1}^{b_1}{ \cN|_p \tensor_R \Rbr(-a_{1,j}) = \oplus_{j=1}^{b_1} W_\bt \tensor \Rbr(-a_{1,j}) }.
\end{multline*}
The right hand side is a finite dimensional graded vector space that only depends on $V_\bt$ and $W_\bt$.  This means that there is an $m$ only depending on $V_\bt$ and $W_\bt$ such that if $|j| > m$ then
\[ \Ext^1_A(\cM|_p,\cN|_p)_j = 0. \]
The Lemma then follows as explained above.
\end{proof}

\begin{proof}[Proof of Theorem \ref{thm:main.finiteness}]
As in the proof of Karroum's Theorem, fix a set of homogeneous generators $a_1,\dotsc,a_\gamma$ for $A$ over $\bk$ and let $\alpha = \max\{\deg(a_i) : 1 \leq i \leq \gamma \}$.  We will show that $\alpha_r = r \cdot \max\{ \alpha, \beta_1,\dotsc, \beta_r \} + 1$ satisfies the statement of Theorem \ref{thm:main.finiteness}.  

Consider a graded, MCM $A$-module $M$ and assume that $w(M) > \alpha_r$.  Then there exists $g_{min}(M) \leq i_0 < g_{max}(M)$ such that $(M/R_+ M)_{i_0+j} = 0$ for all $0 < j < \max\{\alpha,\beta_1,\dotsc,\beta_r\}$.  By Lemma \ref{lem:submodule} the $R$-submodule $M' = \sum_{i \leq i_0 }{ R \cdot M_i }$ is in fact an MCM $A$-submodule and $M/M'$ is also MCM.  We will now show that the choice of $\alpha_r$ guarantees that the extension
\[ 0 \to M' \to M \to M/M' \to 0 \]
splits.  Let $s = \max\{ r(M'), r(M/M')\}$.  Then by construction $g_{max}(M') < g_{min}(M/M') + \beta_s$.  Hence $\Ext^1_A(M/M',M')=0$ by Lemma \ref{lem:ext-degrees} and the above extension splits.
\end{proof}

\begin{proof}[Proof of Corollary \ref{cor:complete}]
Let $A$ be a non-negatively graded $\bk$-algebra and let $\Ah$ be its completion with respect to the irrelevant ideal.  By \cite{KMVDB}, every rigid MCM $\Ah$ module is the completion of a graded MCM $A$-module.  Since $\Ah$ has an isolated singularity, so does $A$.  By Theorem \ref{thm:main.finiteness}, there are finitely many isomorphism classes of indecomposable rigid, graded, MCM $A$ modules of each rank, up to shifting.  Hence there are finitely many isomorphism classes of indecomposable, rigid MCM $\Ah$-modules of each rank.  It is then immediate that there are only finitely many isomorphism classes of rigid, MCM $\Ah$-modules of each rank. 
\end{proof}


\section{Questions and conjectures}
Throughout this section $R$ is a Noetherian local ring. Our aim is to discuss the natural local analogue of our main Theorem \ref{mainThm} on rigid modules. We state it as a:  

\begin{conjecture}\label{f-rigid}
Let $R$ be a complete Noetherian local ring. Fix an integer $N$. Up to isomorphism, there are only finitely many rigid MCM $R$-modules of (Hilbert-Samuel) multiplicity at most $N$. 
\end{conjecture}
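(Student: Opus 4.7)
The plan is to reduce Conjecture \ref{f-rigid} to Corollary \ref{cor:complete} via a series of steps, and then identify which step is the essential obstacle. First I would show that multiplicity controls the analogue of rank. For $R$ complete local Cohen--Macaulay and $M$ MCM, the associativity formula
$$e(\mathfrak{m}, M) = \sum_{\pfr} \ell_{R_\pfr}(M_\pfr) \cdot e(\mathfrak{m}, R/\pfr),$$
with the sum taken over minimal primes $\pfr$ of $R$ of maximal dimension and $e(\mathfrak{m}, R/\pfr) \geq 1$, implies that bounding $e(\mathfrak{m}, M) \leq N$ bounds $\ell_{R_\pfr}(M_\pfr)$ at every such $\pfr$ as well as the number of minimal primes contributing to the support of $M$. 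In the domain case this simply reads $e(M) = \rank_R(M) \cdot e(R)$, matching the ``rank'' appearing in Corollary \ref{cor:complete}.

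Second, I would reduce to the case where $R$ has an isolated singularity. A rigid MCM $R$-module $M$ localizes at any non-maximal prime $\pfr$ to a rigid MCM $R_\pfr$-module, and an induction on the dimension of the singular locus, combined with submodule-producing arguments in the spirit of Lemma \ref{lem:submodule}, should let one reduce the non-isolated case to the isolated one. Some technical care is required because localization does not preserve completeness, so one passes to $\widehat{R_\pfr}$ and invokes faithfully flat descent to transport rigidity and decomposition statements.

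Third, and most seriously, one must pass from an arbitrary complete local isolated singularity to one that is the completion of a non-negatively graded $\bk$-algebra, so as to apply Corollary \ref{cor:complete}. Two natural strategies come to mind: (a) use Artin approximation to replace $R$ by the henselization of a finite-type $\bk$-algebra, then deform this algebra to its associated graded ring via the Rees-algebra flat family, exploiting the fact that rigid MCM modules deform uniquely to lift them across the family; (b) directly construct a representation space of finite type parameterizing MCM $R$-modules of bounded multiplicity, bypassing the graded machinery of Section \ref{sec:main-constructions}.

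I expect step three to be the main obstacle. A generic complete local isolated singularity $R$ is not the completion of any graded ring, and $\mathrm{gr}_{\mathfrak{m}} R$ need not itself be an isolated singularity, so a naive flat degeneration loses the hypothesis of Corollary \ref{cor:complete} in the limit; any approach along (a) must therefore control how rigid MCM modules behave as the singular locus jumps within the family, which seems delicate. Approach (b) would require genuinely new finite-type parameter spaces for modules over a complete local ring, where the grading no longer provides the degree bounds on Hom spaces used in the construction of $\RepRAV$. Overcoming this step appears to demand ideas beyond those in the paper, which is presumably why the statement is posed as a conjecture rather than a theorem.
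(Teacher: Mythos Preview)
The statement you were asked to prove is Conjecture~\ref{f-rigid}, and the paper offers \emph{no proof} of it whatsoever. The authors present it explicitly as an open problem, note that it holds when $\dim(R)=0$, observe that Corollary~\ref{cor:complete} settles the gradable isolated-singularity case, and then say ``the general statements appear to be difficult.'' That is the entirety of the paper's treatment. You have correctly recognized this: your final paragraph identifies step three as requiring ``ideas beyond those in the paper, which is presumably why the statement is posed as a conjecture rather than a theorem.'' So at the level of the bottom line, your assessment agrees with the paper.

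That said, parts of your outline overstate what is actually within reach. Your step two --- reducing from an arbitrary complete local ring to one with an isolated singularity by induction on the singular locus and ``submodule-producing arguments in the spirit of Lemma~\ref{lem:submodule}'' --- is much more speculative than you present it. Lemma~\ref{lem:submodule} is a purely graded statement: it produces MCM submodules by exploiting gaps in the degrees of generators, and there is no evident analogue in the ungraded complete local setting. Moreover, for non-isolated singularities $\Ext^1_A(M,M)$ need not have finite length, which undermines the counting arguments that drive the paper's method. The paper does not even hint that such a reduction is feasible; indeed it remarks that the conjecture is open already for one-dimensional Gorenstein or complete intersection domains, where the singular locus is automatically isolated. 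So the genuine obstacle is not only your step three but also the absence of any mechanism, graded or otherwise, to bound or parameterize rigid MCM modules over a general complete local ring. Your proposal is a reasonable roadmap of where one might look, but it should be read as a list of obstructions rather than as a reduction strategy with one identifiable gap.
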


When $R$ is a domain, the conjecture can be formulated in terms of ranks. 

\begin{conjecture}\label{f-rank}
Let $R$ be a complete Noetherian local domain. Fix an integer $N$. Up to isomorphism, there are only finitely many rigid MCM $R$-modules of rank at most $N$. 
\end{conjecture}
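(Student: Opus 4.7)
The plan is to bridge the gap between Corollary~\ref{cor:complete} (the gradable case) and the general complete local case via algebraization followed by passage to a graded model. At a high level the strategy has three stages: reduce to isolated singularities, algebraize via Artin approximation to a finite-type $\bk$-algebra, and then produce a graded model with isolated singularity to which Theorem~\ref{thm:main.finiteness} applies.

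For the reduction to isolated singularities, observe that if $M$ is a rigid MCM $R$-module then for every prime $\pfr$ the localization $M_\pfr$ is rigid MCM over $R_\pfr$; over a regular local ring every rigid MCM module is free, so $M$ is locally free on the regular locus of $\spec(R)$. A d\'evissage stratifying $\spec(R)$ by its singular locus, combined with induction on the dimension of that locus, should reduce us to the case where $R$ has an isolated singularity.

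For the algebraization step, Popescu's theorem or Artin approximation realizes $R$ as the completion of the henselization of a local ring $R_0$ essentially of finite type over $\bk$. Because rigidity is a vanishing condition on $\Ext^1$ and is therefore stable under small deformations, one should be able to algebraize every rigid MCM $R$-module to a rigid MCM module over an \'etale neighborhood in $\spec(R_0)$, reducing finiteness over $R$ to finiteness over $R_0$.

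The hard part is the third step: producing a graded model with the isolated singularity property. The associated graded $\mathrm{gr}_{\mathfrak{m}}(R_0)$ is graded but typically loses the isolated singularity property, and MCM modules do not specialize cleanly under the Rees deformation. An alternative would be to construct directly a finite-type representation scheme parameterizing MCM $R_0$-modules of rank at most $N$, together with a tangent sequence like Proposition~\ref{prop-tangent-sequence}, so that rigid modules correspond to isolated points. The difficulty is that the $\mathbb{G}_m$-action coming from the grading is essential in Proposition~\ref{Rep-gr-modular} for keeping the representation space of finite type; without it, constructing any finite-type moduli space of MCM $R_0$-modules of bounded rank appears to be the real obstacle, and is plausibly the reason the conjecture remains open.
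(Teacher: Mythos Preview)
The statement you are attempting to prove is labeled \emph{Conjecture} in the paper, and the paper offers no proof; on the contrary, immediately after stating it the authors write that ``the general statements appear to be difficult'' and that they do not know the answer even for one-dimensional Gorenstein or complete intersection domains. So there is nothing to compare your proposal to --- the paper's ``proof'' does not exist, and what you have written is an outline of a possible attack, not a proof.

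That said, the outline has genuine gaps at every stage, not only the one you flag at the end. In your first step, the observation that a rigid MCM module is locally free on the regular locus is correct, but it does not yield a reduction to isolated singularities: the module is still a global object on $\spec(R)$, and there is no mechanism by which ``d\'evissage stratifying $\spec(R)$ by its singular locus'' turns a finiteness statement over $R$ into one over rings with smaller singular locus. You would need, at minimum, some way to cut $M$ into pieces governed by the strata, and MCM modules do not decompose along a stratification of the base. In your second step, Artin/Popescu approximation lets you approximate the \emph{ring}, but algebraizing an individual MCM $R$-module to a module over an \'etale neighborhood of a finite-type model is a separate problem (Elkik-type results are relevant, but require hypotheses you have not checked), and ``rigidity is stable under small deformations'' is the wrong direction: you need to produce an algebraic module whose completion is the given one, not to perturb a given algebraic module. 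Your third paragraph correctly identifies the real obstruction --- without a grading there is no finite-type parameter space --- and your final sentence is essentially an admission that the argument does not close. It does not; the statement remains a conjecture.
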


There are several supporting pieces of evidence for these conjectures. It is true when $\dim(R) = 0$. Our Corollary \ref{cor:complete} establishes the conjectures for rings $R$ with an isolated singularity which are the completion of a non-negatively graded algebra. However, the general statements appear to be difficult. For instance, we do not know if they  hold even for one-dimensional Gorenstein or complete intersection domains. In fact, it may be true that over a one-dimensional complete intersection domain, a rigid $\MCM$ module is free, but it is known only when $R$ is a hypersurface (see \cite{Dao}, especially Section 9).

For the rest of this section we will point out a few links between the above conjectures and some recent works. Before moving on, we recall a relevant definition from \cite{IW10}. A reflexive  module $M$  is called \emphb{modifying} if $\End_R(M)$ is MCM. These modules have been studied intensely recently due to their connections to non-commutative (crepant) desingularizations.  The property of being modifying is closely related to rigidity. 

\begin{prop}\label{serre2}
Suppose that $R$ is normal and satisfies Serre's condition $(R_2)$, that is that $R_{\mathfrak{p}}$ is regular for primes $\mathfrak{p}$ of height at most two. Let $M$ be a reflexive $R$-module. If $M$ is modifying, then it is rigid. The converse is true when $\dim(R) \leq 3$. 
\end{prop}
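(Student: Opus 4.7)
The strategy is to exploit the four-term exact sequence
\[ 0 \to \End_R(M) \to M^{b_0} \to \Hom_R(\Omega M, M) \to \Ext^1_R(M,M) \to 0, \]
obtained by applying $\Hom_R(-,M)$ to a free presentation $0 \to \Omega M \to R^{b_0} \to M \to 0$ (and using that $R^{b_0}$ is free so $\Ext^1_R(R^{b_0},M)=0$). As common input, I would first record: (i) since $R$ is normal with $(R_2)$, for any prime $\mathfrak{p}$ of height at most two, $R_\mathfrak{p}$ is regular and the reflexive module $M_\mathfrak{p}$ has depth $\geq 2$, so by Auslander--Buchsbaum it is free; hence $\Ext^i_R(M,M)$ has support in codimension $\geq 3$ for all $i \geq 1$; (ii) tensor-hom adjunction combined with $M^{**}=M$ gives $\End_R(M) \cong (M \otimes_R M^*)^*$ and $\Hom_R(\Omega M, M) \cong (\Omega M \otimes_R M^*)^*$, so both are duals, hence reflexive (as $R$ is $(S_2)$), and in particular satisfy $(S_2)$ with depth at least $\min(\mathrm{ht}(\mathfrak{p}),2)$ at every prime.

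For the forward direction, suppose $\End_R(M)$ is MCM and, toward contradiction, $\Ext^1_R(M,M) \neq 0$. Pick a minimal prime $\mathfrak{p}$ of its support; by (i), $\mathrm{ht}(\mathfrak{p}) \geq 3$, and $\Ext^1_R(M,M)_\mathfrak{p}$ is of finite length over $R_\mathfrak{p}$, so has depth $0$. Localize the four-term sequence at $\mathfrak{p}$ and split it into two short exact sequences with intermediate cokernel $C$. The depth lemma applied to $0 \to \End_R(M)_\mathfrak{p} \to M_\mathfrak{p}^{b_0} \to C \to 0$ gives $\mathrm{depth}(C) \geq \min(\mathrm{depth}(\End_R(M)_\mathfrak{p})-1,\mathrm{depth}(M_\mathfrak{p})) \geq \min(2,2) = 2$, using the MCM hypothesis on $\End_R(M)$ and (ii) for $M$. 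Applied to $0 \to C \to \Hom_R(\Omega M,M)_\mathfrak{p} \to \Ext^1_R(M,M)_\mathfrak{p} \to 0$, it yields $\mathrm{depth}(\Ext^1_R(M,M)_\mathfrak{p}) \geq \min(\mathrm{depth}(C)-1,\mathrm{depth}(\Hom_R(\Omega M,M)_\mathfrak{p})) \geq \min(1,2) = 1$, contradicting depth $0$.

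For the converse, assume $\dim R \leq 3$ and $\Ext^1_R(M,M) = 0$. If $\dim R \leq 2$, the $(R_2)$-condition forces $R$ regular, $M$ projective, and $\End_R(M)$ free, hence MCM. If $\dim R = 3$, the four-term sequence collapses to a short exact sequence $0 \to \End_R(M) \to M^{b_0} \to \Hom_R(\Omega M,M) \to 0$, and one must show $\End_R(M)$ has depth $3$ at every height-$3$ maximal ideal $\mathfrak{m}$. The main obstacle is that the naive depth lemma, using only the $(S_2)$-estimates $\mathrm{depth}(M_\mathfrak{m}), \mathrm{depth}(\Hom_R(\Omega M,M)_\mathfrak{m}) \geq 2$, only yields $\mathrm{depth}(\End_R(M)_\mathfrak{m}) \geq 2$. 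To upgrade to $3$, I would pass to local cohomology: the $(S_2)$-bounds immediately give $H^0_\mathfrak{m}(\End_R(M)) = H^1_\mathfrak{m}(\End_R(M)) = 0$, and the remaining vanishing $H^2_\mathfrak{m}(\End_R(M)) = 0$ reduces via the long exact sequence to the injectivity of the connecting map $H^2_\mathfrak{m}(M^{b_0}) \to H^2_\mathfrak{m}(\Hom_R(\Omega M,M))$. This last injectivity is the delicate technical heart of the converse; one expects to obtain it by a finer analysis of the cokernel $\Hom_R(\Omega M,M)$ exploiting its explicit description as a dual and the $(R_2)$-condition on $R$.
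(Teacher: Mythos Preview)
Your forward direction is correct and is essentially the paper's argument spelled out: both use the four-term sequence obtained from $\Hom_R(-,M)$ applied to a free presentation of $M$ and then count depths. The paper phrases the reduction as ``by induction on dimension and localizing'' rather than localizing at a minimal prime of the support of $\Ext^1_R(M,M)$, but this comes to the same thing.

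The gap is in your converse. The paper's proof here is one line: ``the same sequence also implies the converse when $\dim R \le 3$'', meaning one applies the depth lemma directly to the short exact sequence
\[ 0 \to \End_R(M) \to M^{b_0} \to \Hom_R(\Omega M, M) \to 0 \]
to get $\operatorname{depth}\End_R(M) \ge \min\bigl(\operatorname{depth} M,\ \operatorname{depth}\Hom_R(\Omega M,M) + 1\bigr) \ge \min(\operatorname{depth} M,\, 3)$, using that $\Hom_R(\Omega M,M)$ is reflexive. This yields $\operatorname{depth}\End_R(M) = 3$ as soon as $\operatorname{depth} M = 3$, i.e.\ as soon as $M$ is MCM --- which is the case in every application the paper makes of this proposition. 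You keep only the bound $\operatorname{depth} M \ge 2$ coming from reflexivity, correctly observe that this alone gives only $\operatorname{depth}\End_R(M) \ge 2$, and then propose to recover $H^2_{\mathfrak m}(\End_R(M)) = 0$ by proving that the map $H^2_{\mathfrak m}(M^{b_0}) \to H^2_{\mathfrak m}(\Hom_R(\Omega M,M))$ is injective. But in the long exact sequence, once $H^1_{\mathfrak m}(\Hom_R(\Omega M,M)) = 0$ is in hand, that injectivity is \emph{equivalent} to $H^2_{\mathfrak m}(\End_R(M)) = 0$: you have restated the goal, not reduced it. If one genuinely wants the converse for $M$ merely reflexive rather than MCM, more input is needed than either your sketch or the paper's one-line depth count provides; but for $M$ MCM the depth lemma already finishes the job, and your detour through local cohomology is unnecessary.
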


\begin{proof}
This is well-known but we could not find a convenient reference. Let $M$ be  a modifying module. As $M$ is reflexive, if $\dim(R)\leq 2$ then $M$ is automatically free, hence rigid. Suppose $\dim(R)\geq 3$. By induction on dimension and localizing, $\Ext^1_R(M,M)$ is supported only at the maximal ideal, so is either $0$ or has depth $0$. Now take a projective cover of $M$: $0 \to \Omega M \to F \to M \to 0$ and apply $\Hom_R(-,M)$ yields: 
\[ 0 \to \Hom_R(M,M) \to  \Hom_R(F,M) \to \Hom_R(\Omega M, M) \to \Ext^1_R(M,M)\to 0. \]
Counting depths along this new exact sequence one sees that $\Ext^1_R(M,M) =  0$. The same sequence also implies the converse when $\dim(R) \leq 3$. 
\end{proof}

\begin{conjecture}\label{ser2}
Let $R$ be complete local Cohen-Macaulay ring satisfying Serre's condition $(R_2)$. The the set of MCM elements in the class group of $R$ is finite. 
\end{conjecture}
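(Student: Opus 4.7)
Since $R$ is Cohen--Macaulay and satisfies $(R_2)$ it is normal, so $\mathrm{Cl}(R)$ parameterizes isomorphism classes of rank-one reflexive modules. For any such $I$ one has $\End_R(I) \cong R$, which is MCM, so every rank-one reflexive module is automatically modifying and hence, by Proposition \ref{serre2}, rigid. Thus the MCM elements of $\mathrm{Cl}(R)$ form a subset of the isomorphism classes of rigid MCM modules of rank one, and Conjecture \ref{ser2} becomes the rank-one case of Conjecture \ref{f-rank}. When $R$ is already the completion of a non-negatively graded $\bk$-algebra with isolated singularity, Corollary \ref{cor:complete} settles it immediately.

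To reach the general statement one must drop two hypotheses of Corollary \ref{cor:complete}: gradability and isolated singularity. For the first, I would attempt to use Artin approximation together with the Keller--Murfet--Van den Bergh lifting result to produce, for each rigid MCM over $R$, a rigid MCM over an essentially-of-finite-type model, and then pass to a graded model by a rescaling/degeneration argument. This step feels accessible given the existing technology, and would let us transport the finiteness obtained on a graded model back to $R$.

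The second hypothesis is where the real difficulty lies, and I expect it to be the main obstacle. The proof of Theorem \ref{thm:main.finiteness} rests on Lemma \ref{lem:ext-degrees}, which depends critically on $\Ext^1_A(M,N)$ being of finite length---a property that holds precisely because the singular locus is a single point. Under only $(R_2)$ the singular locus has codimension $\geq 3$ but may be positive-dimensional, so $\Ext^1$ need not be artinian and the degree-shift trick of Lemma \ref{lem:ext-degrees} collapses. A workaround tailored to rank one is to induct on $\dim(\mathrm{Sing}(R))$ via sufficiently generic hyperplane sections $R \to R/(f)$: every class in $\mathrm{Cl}(R)$ is locally free off the singular locus, and the MCM property for a rank-one reflexive module imposes depth conditions along it, so one hopes to reduce the $\dim = d$ case to the $\dim = d-1$ case, with the base case $\dim = 0$ (isolated singularity) handled by Corollary \ref{cor:complete}. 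Controlling how $\mathrm{Cl}$ and its MCM subset behave under such hyperplane sections---in particular bounding how many new classes can appear and still satisfy the depth condition---is the step I expect to require genuinely new ideas, likely via a combination of Danilov-style discrete divisor class group techniques and Mori--Nagata-type stability results.
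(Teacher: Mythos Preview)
The statement you are trying to prove is labelled \emph{Conjecture} in the paper, and the paper does not prove it. Immediately after stating it, the authors remark that it is open even when $\dim(R)=3$ and $R$ is a complete intersection, and they point to \cite{DK} for the only known case (hypersurfaces containing a field). So there is no ``paper's own proof'' to compare your proposal to.

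Your first paragraph does exactly reproduce the paper's one-line reduction: a rank-one reflexive $I$ has $\End_R(I)\cong R$, hence is modifying, hence rigid by Proposition~\ref{serre2}, so Conjecture~\ref{ser2} is the rank-one instance of Conjecture~\ref{f-rank}. That is precisely the content of the paragraph the authors place right after the conjecture. Beyond that reduction, the paper offers nothing further.

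The remainder of your proposal is an honest sketch of where the obstacles lie, and you have located them correctly: dropping gradability might be handled by approximation-type arguments, but dropping the isolated-singularity hypothesis breaks Lemma~\ref{lem:ext-degrees} because $\Ext^1$ is no longer of finite length. Your proposed induction on $\dim(\mathrm{Sing}(R))$ via generic hyperplane sections is a reasonable line of attack, but as you yourself note, controlling the MCM locus of the class group under such sections is not something currently available in the literature. In short, your plan does not constitute a proof, and the paper confirms that none is presently known.
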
 
As far as we know, this is open even when $\dim(R)=3$ and $R$ is a complete intersection singularity. When $R$ is a hypersurface and contains a field, it is known by \cite[Corollary 4.8]{DK} (see also Section 4 of that paper for some discussion of this conjecture). 

The point is, under the assumptions on $R$, the MCM elements in the class group are rigid. In fact, by basic property of the class group, such modules are modifying. 

\begin{prop} \label{f-modi}
Let $(R,\frak m)$ be a three dimensional complete local domain. Suppose for some element $t\in \frak m-\frak m^2$, $R/tR$ is a quotient singularity. If Conjecture \ref{f-rank} holds, then $R$ has only finitely many indecomposable rigid MCM modules, up to isomorphism. 
\end{prop}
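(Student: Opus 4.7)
The plan is to bound the rank of any indecomposable rigid MCM $R$-module $M$ in terms of invariants of the Du Val quotient $\Rbr := R/tR$, and then invoke Conjecture \ref{f-rank} directly. Since $R$ is a three-dimensional CM complete local domain, MCM $R$-modules are torsion-free, and $t \in \mathfrak{m}-\mathfrak{m}^2$ (a nonzero element of the domain $R$) is regular on $R$ and on every MCM $R$-module. In particular, for any MCM module $M$ the reduction $\Mbr := M/tM$ is MCM over $\Rbr$ and has the same rank as $M$.

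The key change-of-rings input that I would establish first is that rigidity of $M$ yields
\[ \End_R(M)/t\End_R(M) \;\cong\; \End_{\Rbr}(\Mbr). \]
This comes from applying $\Hom_R(M,-)$ to $0 \to M \xrightarrow{t} M \to \Mbr \to 0$, combined with the adjunction $\Hom_R(M,\Mbr) = \Hom_{\Rbr}(\Mbr,\Mbr)$: the left-hand inclusion uses that $\End_R(M)$ embeds in a power of $M$ and is therefore $t$-torsion-free, while surjectivity onto $\End_{\Rbr}(\Mbr)$ uses $\Ext^1_R(M,M)=0$ to kill the connecting map.

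Next I would appeal to Krull--Schmidt. Over the complete local ring $R$, indecomposability of $M$ is equivalent to $\End_R(M)$ being a (noncommutative) local ring; by the isomorphism above, $\End_{\Rbr}(\Mbr)$ is then also local, and because $\Rbr$ is complete local this forces $\Mbr$ to be \emph{indecomposable} as an MCM $\Rbr$-module. Two-dimensional quotient singularities have finite CM representation type (Auslander, Esnault, Herzog), so $\Mbr$ lies in an explicit finite list $N_1,\dotsc,N_k$ of indecomposable MCM $\Rbr$-modules, and
\[ \rank_R(M) \;=\; \rank_{\Rbr}(\Mbr) \;\leq\; C := \max_i \rank_{\Rbr}(N_i). \]
Applying Conjecture \ref{f-rank} with $N = C$ then yields finitely many rigid MCM $R$-modules of rank at most $C$, and in particular finitely many indecomposable ones.

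The main obstacle is the change-of-rings isomorphism in the first step: one must verify carefully that $t$ is a non-zero-divisor on $\End_R(M)$ (so the quotient $\End_R(M)/t\End_R(M)$ makes sense) and that the $t$-torsion in $\Ext^1_R(M,M)$ really controls the cokernel of $\End_R(M)/t \to \End_{\Rbr}(\Mbr)$. Once these points are pinned down, the remaining steps are formal consequences of Krull--Schmidt over complete local rings together with the classical finite CM type of two-dimensional quotient singularities.
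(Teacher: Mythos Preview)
Your argument is correct and follows the same strategy as the paper: bound the rank of an indecomposable rigid MCM $R$-module by reducing modulo $t$ and using finite CM type of the two-dimensional quotient singularity $R/tR$, then invoke Conjecture~\ref{f-rank}. The paper simply cites \cite[Prop.~4.3]{DH} for the rank bound, whereas you supply the details (the change-of-rings isomorphism $\End_R(M)/t\End_R(M)\cong\End_{\Rbr}(\Mbr)$ from rigidity, and the Krull--Schmidt transfer of indecomposability); these are exactly the ingredients behind that citation, so the two proofs coincide.
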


\begin{proof}
By the proof of \cite[Prop 4.3]{DH}, we know that all indecomposable rigid modules have  rank bounded above by the maximal rank of the MCM modules over $R/tR$ (this number exists since $R/tR$ is a two dimensional quotient singularity, hence has only finitely many $\MCM$ modules up to isomorphism). Conjecture \ref{f-rank} then implies that there are only finitely many such modules up to isomorphism.  
\end{proof}

The above Proposition is related to some recent results in \cite{IW10, W}. Recall that if $R/tR$ is a Kleinian (Du Val, or simple) singularity for a general choice of $t$, then $R$ is called a \emphb{compound Du Val (cDV)} singularity. Also, recall from \cite{IW10} that a reflexive module is \emphb{maximal modifying} if $\add(M)= \{X \ | \End_R(M\oplus X) \in \MCM(R)\}$.  It has recently been shown that there are only finitely many modules that are indecomposable summands of maximal modifying modules which is a generator (has $R$ as a summand). The proof rests upon some sophisticated birational geometry and tilting theory. For example, it was shown (\cite[Theorem 4.9]{W}) that the basic maximal modifying generators corresponds bijectively to minimal models over $\spec(R)$. 

Since maximal modifying modules are automatically MCM and modifying, Propositions \ref{serre2} and \ref{f-modi} say that Conjecture \ref{f-rank} would imply such finiteness results when $R$ has an isolated singularity such that for some choice of $t$, $R/tR$  is a quotient singularity.


\bibliographystyle{alpha}
\bibliography{ds}
\end{document}